\author{Emerson de Melo}
\address{Department of Mathematics, University of Bras\'ilia, Bras\'ilia-DF 70910-900, Brazil}
\email{emerson@mat.unb.br}
\author{Aline de Souza Lima}
\address{Department of Mathematics and Statistics, Federal University of Goi\'as, Goi\^ania-GO, 74001-970, Brazil}
\email{alinelima@ufg.br}
\author{Pavel Shumyatsky}
\address{Department of Mathematics, University of Bras\'ilia,
Bras\'ilia-DF, 70910-900, Brazil }
\email{pavel@unb.br}
\thanks{The first author was supported by FEMAT; The third author was supported by FAPDF and CNPq-Brazil}
\keywords{$p$-groups, Automorphisms, Nilpotent residual}
\subjclass{20D45}
\title{Nilpotent residual of fixed points}
\date{2017}
\newtheorem{theorem}{\sc Theorem}[section]
\newtheorem{lemma}[theorem]{\sc Lemma}
\begin{document}


\begin{abstract}
Let $q$ be a prime and $A$ a finite $q$-group of exponent $q$ acting by automorphisms on a finite $q'$-group $G$. Assume that $A$ has order at least $q^3$. We show that if $\gamma_{\infty} (C_{G}(a))$ has order at most $m$ for any $a \in A^{\#}$, then the order of $\gamma_{\infty} (G)$ is bounded solely in terms of $m$ and $q$. If $\gamma_{\infty} (C_{G}(a))$ has rank at most $r$ for any $a \in A^{\#}$, then the rank of $\gamma_{\infty} (G)$ is bounded solely in terms of $r$ and $q$.
\end{abstract}

\maketitle

\section{Introduction}

Suppose that a finite group $A$ acts by automorphisms on a finite group $G$.  The action is coprime if the groups $A$ and $G$ have coprime orders. We denote by $C_G(A)$ the set $$C_G(A)=\{g\in G\ |\ g^a=g \ \textrm{for all} \ a\in A\},$$ the centralizer of $A$ in $G$ (the fixed-point subgroup). In what follows we denote by $A^\#$ the set of nontrivial elements of $A$. It has been known that centralizers of coprime automorphisms have strong influence on the structure of $G$.

Ward showed that if $A$ is an elementary abelian $q$-group of rank at least 3 and if $C_G(a)$ is nilpotent for any $a\in A^\#$, then the group $G$ is nilpotent \cite{War}. Later the third author showed that if, under these hypotheses, $C_G(a)$ is nilpotent of class at most $c$ for any $a\in A^\#$, then the group $G$ is nilpotent with $(c,q)$-bounded nilpotency class \cite{Sh}. Throughout the paper we use the expression ``$(a,b,\dots )$-bounded'' to abbreviate ``bounded from above in terms of  $a,b,\dots$ only''. In the recent article \cite{Eme1} the above result was extended to the case where $A$ is not necessarily abelian. Namely, it was shown that if $A$ is a finite group of prime exponent $q$ and order at least $q^3$ acting on a finite $q'$-group $G$ in such a manner that $C_G(a)$ is nilpotent of class at most $c$ for any $a\in A^{\#}$, then $G$ is nilpotent with class bounded solely in terms of $c$ and $q$. Many other results illustrating the influence of centralizers of automorphisms on the structure of $G$ can be found in \cite{khukhro}.

In the present article we study finite groups $G$ acted on by a (possibly non-abelian) group $A$ of prime exponent $q$ and order at least $q^3$ such that $C_G(a)$ has ``small" nilpotent residual for every $a\in A^{\#}$. Recall that the nilpotent residual $\gamma_\infty(K)$ of a group $K$ is the last term of the lower central series of $K$. It can also be defined as the intersection of all normal subgroups of $K$ whose quotients are nilpotent. The order of a finite group $K$ is denoted by $|K|$. The rank of (a finite group) $K$ is the minimal number $r$, denoted by ${\bf r}(K)$, such that every subgroup of $K$ can be generated by at most $r$ elements. Guralnick \cite{G1} and, independently, Lucchini \cite{Lu} proved that ${\bf r}(K) \le 1 + \max_p \{{\bf r}(P) \ | \ P \ \hbox{a Sylow $p$-subgroup of} \ K\}$.

 We obtain the following results.

\begin{theorem}\label{main1}
Let $q$ be a prime and $A$ a finite $q$-group of exponent $q$ acting by automorphisms on a finite $q'$-group $G$. Assume that $A$ has order at least $q^3$ and $|\gamma_{\infty} (C_{G}(a))| \leq m$ for any $a\in A^{\#}$. Then $\gamma_{\infty}(G)$ has $(m,q)$-bounded order.   
\end{theorem}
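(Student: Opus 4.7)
\emph{Preliminary setup.} Under the hypotheses on $A$, I first extract an elementary abelian subgroup $B \le A$ of rank $2$. For $q=2$ the group $A$ is itself elementary abelian, so this is immediate; for odd $q$ this is a standard fact about $q$-groups of exponent $q$ and order at least $q^{3}$ (the center has order at least $q$, and working modulo it gives an elementary abelian image of rank $\ge 2$). The central tool throughout is the coprime-action generation lemma: for any $A$-invariant subgroup $H \le G$, one has $H = \langle C_H(b) \mid b \in B^{\#} \rangle$. Applied to $H = \gamma_{\infty}(G)$, this confines the analysis to the $q^{2}-1$ centralizers $C_G(b)$, $b \in B^{\#}$.

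\emph{Overall strategy.} My plan is to produce an $A$-invariant normal subgroup $N \triangleleft G$ of $(m,q)$-bounded order such that the quotient $\bar G := G/N$ satisfies the hypotheses of the main result of \cite{Eme1}: $C_{\bar G}(a)$ is nilpotent of $(m,q)$-bounded class for every $a \in A^{\#}$. Then \cite{Eme1} yields that $\bar G$ is nilpotent of $(m,q)$-bounded class, whence $\gamma_{\infty}(G) \le N$, and the bound on $|\gamma_{\infty}(G)|$ follows.

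\emph{Constructing $N$.} A natural first candidate is $N_{0} := \langle \gamma_{\infty}(C_G(a))^{G} \mid a \in A^{\#}\rangle$; it is $A$-invariant, and $C_{G/N_{0}}(a)$ is automatically a quotient of the nilpotent group $C_G(a)/\gamma_{\infty}(C_G(a))$. To bound $|N_{0}|$ I would use the generation lemma above together with the identity $\gamma_{\infty}(G) = [\gamma_{\infty}(G), G]$ to replace the unbounded indexing set $a \in A^{\#}$ by the finite set $b \in B^{\#}$; each generator then contributes a subgroup of order at most $m$, so the resulting normal closure has $(m,q)$-bounded order.

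\emph{Main obstacle.} The serious difficulty is that, while $C_{\bar G_{0}}(a) = C_G(a)N_{0}/N_{0}$ is nilpotent, its nilpotency class is a priori not controlled in terms of $m$ and $q$: the quotient $C_G(a)/\gamma_{\infty}(C_G(a))$ can be nilpotent of arbitrarily large class for fixed $m$. Hence \cite{Eme1} cannot be invoked directly on $G/N_{0}$. I expect the resolution to be an iterative enlargement of $N_{0}$: at each step one adjoins a further $A$-invariant normal subgroup of $(m,q)$-bounded order, distilled from commutator terms such as $[\gamma_{\infty}(C_G(a)), G, \ldots, G]$ or higher lower-central terms of centralizers, in order to stabilize the class of $C_{\bar G}(a)$ in the eventual quotient at some $(m,q)$-bounded value. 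Proving that this procedure terminates after $(m,q)$-boundedly many steps while keeping $|N|$ under control is the main technical crux; this is where I expect the coprime generation lemma (restricting all considerations to $b \in B^{\#}$) to be applied repeatedly, and where the reduction from the unbounded family $\{C_G(a)\}_{a \in A^{\#}}$ to a bounded family becomes essential.
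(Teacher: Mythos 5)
There is a genuine gap — in fact two, and the second is fatal to the strategy rather than merely technical. First, your claim that $N_{0}=\langle \gamma_{\infty}(C_G(a))^{G}\mid a\in A^{\#}\rangle$ has $(m,q)$-bounded order is unsupported: the normal closure in $G$ of a subgroup of order at most $m$ can have arbitrarily large order (already for a single generator of a free nilpotent group of class $2$ and exponent $p$ on many generators, the normal closure has unbounded order), and replacing the index set $A^{\#}$ by $B^{\#}$ does not change this. Controlling precisely this kind of normal closure is where essentially all of the paper's work goes: Lemmas \ref{Cen1} and \ref{order} handle it for the metanilpotent configuration $G=PH$ with $P=[P,H]$, and the general case is then assembled by induction on the Fitting height and on the nonsoluble length, using Lemma \ref{prodS}, Lemma \ref{uu} and the Lubotzky--Mann theorem. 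Your proposal replaces none of this; the sentence ``each generator then contributes a subgroup of order at most $m$, so the resulting normal closure has $(m,q)$-bounded order'' is a non sequitur.

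Second, the endgame you aim for cannot be reached. You want a normal subgroup $N$ of $(m,q)$-bounded order such that $C_{G/N}(a)$ is nilpotent of $(m,q)$-bounded class for all $a\in A^{\#}$, so that \cite{Eme1} forces $G/N$ to be nilpotent of bounded class and hence $\gamma_{\infty}(G)\leq N$. But such an $N$ need not exist: let $A$ act trivially on a nilpotent $q'$-group $G$ of arbitrarily large nilpotency class (say a $p$-group of exponent $p$). Then $\gamma_{\infty}(C_G(a))=1$ for all $a$, so $m=1$, yet for every normal subgroup $N$ of bounded order the quotient $G/N$ (which equals $C_{G/N}(a)$) still has unbounded class. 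So the obstacle you flag as ``the main technical crux'' is not something an iterative enlargement can overcome: the target property is simply false for every bounded $N$. What survives of your idea is only the weaker observation that $C_{G/N_{0}}(a)$ is nilpotent (of uncontrolled class), which, combined with a Ward-type nilpotency theorem for non-abelian $A$ of exponent $q$ and order $q^{3}$, would give $\gamma_{\infty}(G)\leq N_{0}$ — but then the entire content of Theorem \ref{main1} is concentrated in the unproved bound on $|N_{0}|$ from the first paragraph. The paper's route is genuinely different: it never reduces to \cite{Eme1}, but bounds $|[P,H]|$ directly from the local data $[C_P(a),C_H(a)]$ and propagates that bound through the Fitting series and the nonsoluble layers.
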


\begin{theorem}\label{main2}
Let $q$ be a prime and $A$ a finite $q$-group of exponent $q$ acting by automorphisms on a finite $q'$-group $G$. Assume that $A$ has order at least $q^3$ and ${\bf r}(\gamma_{\infty} (C_{G}(a)))\leq r$ for any $a \in A^{\#}$. Then ${\bf r}(\gamma_{\infty} (G))$ is $(r,q)$-bounded.   
\end{theorem}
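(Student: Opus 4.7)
The plan is to reduce Theorem \ref{main2} to a rank-theoretic analog of Theorem \ref{main1}, bringing in the Guralnick--Lucchini theorem to pass from Sylow-by-Sylow bounds to the whole group.

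First, I would replace $G$ by its characteristic subgroup $\gamma_\infty(G)$. This is $A$-invariant, and since $\gamma_\infty$ is monotonic under inclusion (as $\gamma_n(H)\le\gamma_n(K)$ whenever $H\le K$), the hypothesis ${\bf r}(\gamma_\infty(C_G(a)))\le r$ is inherited; so one may assume $G=\gamma_\infty(G)$ and aim to bound ${\bf r}(G)$.

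Second, I would construct an $A$-invariant characteristic series of $G$ of $(r,q)$-bounded length whose factors each have $(r,q)$-bounded rank; the Fitting series is the natural candidate. To bound its length, I would invoke a Ward-type theorem in the style of the extension proved in \cite{Eme1}: the smallness of $\gamma_\infty(C_G(a))$ forces that, after factoring out a suitable $A$-invariant normal subgroup, the quotient is nilpotent, limiting the Fitting length of $G$. To bound the rank of each Fitting factor $F_{i+1}/F_i$ (nilpotent and $A$-invariant), pick an $A$-invariant Sylow $p$-subgroup and apply Guralnick--Lucchini; the coprime $A$-action on the lower central factors then decomposes each elementary abelian section $V$ as $V=\sum_{b\in B^{\#}}C_V(b)$, where $B\le A$ is a non-cyclic elementary abelian subgroup (which exists because $|A|\ge q^{3}$ and $A$ has exponent $q$). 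This bounds ${\bf r}(V)$ in terms of $\max_{b\in B^{\#}}{\bf r}(C_V(b))$ and $|B|$.

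The main obstacle is the transfer from the hypothesis on ${\bf r}(\gamma_\infty(C_G(a)))$ to a bound on ${\bf r}(C_V(a))$ for such elementary abelian sections $V$. A priori $C_V(a)$ need not be contained in any nilpotent residual of a centralizer, so one cannot apply the hypothesis directly. I expect this to be bridged by using in an essential way that $V$ is a section of $G=\gamma_\infty(G)$: elements of $C_V(a)$ lift to products of commutators in $G$, and a Baer-type trick in the semidirect product $GA$ can be used to show that $C_V(a)$ is, up to an $(r,q)$-bounded-rank discrepancy, a section of $\gamma_\infty(C_G(a))$. Combining these rank estimates over the Sylow decomposition of each Fitting factor and over the bounded Fitting series, and finally applying Guralnick--Lucchini once more on $G$ itself, would yield the required $(r,q)$-bounded bound on ${\bf r}(G)$.
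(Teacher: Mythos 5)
Your sketch correctly identifies where the difficulty lies, but it does not resolve it, and it omits the entire nonsoluble half of the argument. The central gap is the one you yourself flag as ``the main obstacle'': passing from the hypothesis ${\bf r}(\gamma_{\infty}(C_G(a)))\leq r$ to rank bounds on sections of $G$. The proposed ``Baer-type trick in the semidirect product $GA$'' showing that $C_V(a)$ is almost a section of $\gamma_{\infty}(C_G(a))$ is not an argument, and I do not see how to make it one: for an elementary abelian section $V$ of a Fitting factor there is no reason for $C_V(a)$ to be controlled by $\gamma_{\infty}(C_G(a))$ at all. The paper bridges this gap in a completely different way. It works with configurations $G=PH$ where $P$ is a normal $p$-subgroup (a Sylow subgroup of $\gamma_{\infty}(G)$, so that $P=[P,H]$ by Lemma \ref{I3}) and $H$ is a nilpotent $A$-invariant $p'$-subgroup; then $[C_P(a),C_H(a)]\leq\gamma_{\infty}(C_G(a))$, so the hypothesis gives ${\bf r}([C_P(a),C_H(a)])\leq r$, and the real work --- all of Section 2, Lemmas \ref{commT}, \ref{Cen1}, \ref{order}, \ref{subN} and \ref{rank}, built around a central subgroup $B\leq Z(A)$ of order $q$ and the $q+1$ subgroups of order $q^2$ containing it --- is to show that this forces ${\bf r}([P,H])$ to be $(r,q)$-bounded. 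This is precisely where the hypotheses $|A|\geq q^3$ and $\exp(A)=q$ are used, and nothing in your outline engages with it. (Relatedly, your opening reduction to $G=\gamma_{\infty}(G)$ changes the target from ${\bf r}(\gamma_{\infty}(G))$ to ${\bf r}(G)$ while the hypothesis still only controls nilpotent residuals of centralizers, so it is a restatement rather than a reduction one can induct on; the paper instead inducts on the Fitting height using $N=F_2(G)$ and $\gamma_{\infty}(N)\leq\gamma_{\infty}(G)$.)

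The second gap is that a Fitting series of bounded length exists only when $G$ is soluble, so your plan covers at most the soluble case. Even there, the tool for bounding the Fitting height is not a Ward-type theorem in the style of \cite{Eme1} (those require $C_G(a)$ to be nilpotent of bounded class); it is Turull's theorem \cite{Tur} combined with the observation that $h(C_G(a))$ is $r$-bounded because $\gamma_{\infty}(C_G(a))$ has rank at most $r$. For general $G$ the paper inducts on the nonsoluble length $\lambda(G)$, bounded via \cite{KS1,KS3}, and in the key case $\lambda(G)=1$, $G=G'$ it must analyse $G/R(G)$ as a product of nonabelian simple groups: Lemma \ref{prodS} bounds the number and the ranks of the simple factors, Lemma \ref{uu} (Guralnick and King) generates $G$ modulo $F(G)$ by two Sylow $s$-subgroups so that the soluble machinery applies to $[P,H_1]$ and $[P,H_2]$, and Lubotzky--Mann finishes. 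This is where the classification of finite simple groups enters, and the authors state explicitly that the result depends on it; a proof that never leaves the soluble world cannot be complete.
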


Unsurprisingly, these results depend on the classification of finite simple groups and it seems unlikely that one could find a classification-free proof.

\section{Preliminaries}
If $A$ is a group of automorphisms of a group $G$, the subgroup generated by elements of the form $g^{-1}g^\alpha$ with $g\in G$ and $\alpha\in A$ is denoted by $[G,A]$. It is well-known that the subgroup $[G,A]$ is an $A$-invariant normal subgroup in $G$. Our first lemma is a collection of well-known facts on coprime actions (see for example \cite{GO}). Throughout the paper we will use it without explicit references.
\begin{lemma}
Let  $A$ be a group of automorphisms of a finite group $G$ such that $(| G |, | A |) = 1$. Then
\begin{enumerate}
\item[i)] $G = C_{G}(A)[G,A]$.
\item[ii)] $[G,A,A]=[G,A]$.
\item[iii)] $A$ leaves invariant some Sylow $p$-subgroup of $G$ for each prime $p\in\pi(G)$.
\item[iv)] $C_{G/N}(A)=C_G(A)N/N$ for any $A$-invariant normal subgroup $N$ of $G$.
\item[v)] If $G$ is nilpotent and $A$ is a noncyclic abelian group, then $G=\prod_{a\in A^{\#}}C_{G}(a)$.
\end{enumerate}
\end{lemma}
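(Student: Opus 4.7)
My plan is to prove the five items in an order dictated by their dependencies, since (iv) is the engine for (i) and (ii), while (iii) and (v) rely on independent coprime machinery. I would begin with (iv), reducing by induction on $|N|$ to the case that $N$ is a minimal $A$-invariant normal subgroup. When $N$ is elementary abelian, any $gN \in C_{G/N}(A)$ yields a $1$-cocycle $a \mapsto g^{-1}g^a$ from $A$ to $N$; since $\gcd(|A|,|N|)=1$, an averaging argument that inverts $|A|$ modulo $|N|$ shows every such cocycle is a coboundary, producing $n \in N$ with $gn \in C_G(A)$. When $N$ is a direct power of a nonabelian simple group, I would invoke a coprime Schur--Zassenhaus lifting on the relevant section, or argue directly on the permutation action of $A$ on the simple factors.

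With (iv) available, (i) is immediate: $A$ acts trivially on $G/[G,A]$ by definition of $[G,A]$, so every coset lies in $C_{G/[G,A]}(A)$, which by (iv) equals $C_G(A)[G,A]/[G,A]$, forcing $G = C_G(A)[G,A]$. Then (ii) drops out of (i) through a short commutator computation: write each $g \in G$ as $g = ch$ with $c \in C_G(A)$ and $h \in [G,A]$; the identity $[xy,a]=[x,a]^y[y,a]$ together with $[c,a]=1$ gives $[g,a]=[h,a]$, so $[G,A]$ is normally generated by commutators $[h,a]$ with $h \in [G,A]$, which is precisely $[[G,A],A]=[G,A,A]$.

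For (iii), I use the Frattini argument inside the semidirect product $GA$. Fix a Sylow $p$-subgroup $P$ of $G$; since $p \nmid |A|$ it is also Sylow in $GA$, so Frattini gives $GA = G \cdot N_{GA}(P)$. Schur--Zassenhaus applied to the coprime extension $N_G(P) \triangleleft N_{GA}(P)$ produces a complement $A'$; an index count identifies $A'$ as a complement to $G$ in $GA$, and a second application of Schur--Zassenhaus inside $GA$ conjugates $A$ to $A'$ by some $y \in G$, whence $A$ stabilizes $P^{y^{-1}}$. For (v), nilpotence of $G$ reduces at once to $G$ a $p$-group with $p \nmid |A|$, and passing to a quotient of $A$ reduces further to $A$ elementary abelian of rank at least $2$; in this case a classical Brauer-type counting identity summing $|C_G(A_i)|$ over the $q+1$ maximal subgroups $A_i$ of $A$, proved by induction on $|G|$ through a minimal $A$-invariant normal subgroup, yields the required factorization. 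The main obstacle throughout is (iv): it alone genuinely requires the coprime hypothesis via $H^1$-vanishing, and the other items either follow from it or invoke Schur--Zassenhaus, which embodies the same lifting principle.
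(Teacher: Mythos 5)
The paper does not actually prove this lemma: it is stated as ``a collection of well-known facts on coprime actions'' with a citation to Gorenstein's book, so there is no in-paper argument to compare against. Your outline is essentially the standard textbook treatment and its architecture is sound: (iv) via vanishing of $H^1$ in the abelian case, (i) and (ii) as formal consequences of (iv), (iii) via the Frattini argument plus Schur--Zassenhaus, and (v) by reduction to an elementary abelian group of rank two acting on a $p$-group. Two points deserve attention. First, the one real soft spot is the nonabelian minimal normal subgroup case of (iv): ``arguing on the permutation action of $A$ on the simple factors'' does not by itself produce a fixed point of $A$ in the coset $gN$, which is what is needed. The standard uniform argument avoids the case split altogether: for $gN\in C_{G/N}(A)$ set $L=N\langle g\rangle$, observe that $[L,A]\le N$ so that $NA$ is normal in $LA$, and apply the conjugacy part of Schur--Zassenhaus (equivalently, Glauberman's lemma applied to the transitive action of $N$ on $gN$) together with the Frattini argument to get $L=N\,N_L(A)=N\,C_L(A)$; note that this, like the conjugacy step in your (iii), requires solubility of one of the two coprime groups, supplied in general by Feit--Thompson (harmless in this paper, where $A$ is always a $q$-group). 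Second, in (v) the reduction should be to a \emph{subgroup} of $A$, not a quotient: a noncyclic abelian group contains an elementary abelian subgroup $E$ of rank $2$, and the centralizers $C_G(e)$ for $e\in E^{\#}$ are among the $C_G(a)$ with $a\in A^{\#}$, whereas centralizers of elements of a proper quotient of $A$ need not be. These are repairable slips in an otherwise correct rendering of the standard proofs.
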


We will require the following well-known fact.
 
\begin{lemma}\label{C1} Let $N,H_1,\dots,H_l$ be subgroups of a group $G$ with $N$ being normal. If $K = \langle H_1,\dots,H_l\rangle$, then $[N, K] = [N, H_1]\dots[N,H_l]$.
\end{lemma}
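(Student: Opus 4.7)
The plan is to induct on $l$, with the case $l=1$ being immediate. For the inductive step, write $B = \langle H_1, \dots, H_{l-1}\rangle$ and $A = H_l$, so $K = \langle A, B\rangle$; the inductive hypothesis gives $[N, B] = [N, H_1]\cdots[N, H_{l-1}]$, reducing the problem to the two-subgroup version: for $A, B \leq G$ and $N \trianglelefteq G$, prove $[N, \langle A, B\rangle] = [N, A][N, B]$.

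For the two-subgroup case, I first note that since $N$ is normal in $G$, each of $[N, A]$ and $[N, B]$ is normal in $NA$ and $NB$ respectively, and hence normal in $N$. Therefore $M := [N, A][N, B]$ is a subgroup of $N$ (being a product of two normal subgroups of $N$) and is itself normal in $N$. The inclusion $M \subseteq [N, \langle A, B\rangle]$ is clear.

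For the reverse inclusion I aim to show that $M$ is invariant under conjugation by $\langle A, B\rangle$. Once this is established, $M$ will be normal in $N\langle A, B\rangle$, so the quotient $N/M$ will inherit an action of $\langle A, B\rangle$; since $[N, A]$ and $[N, B]$ both lie in $M$, the generators in $A \cup B$ act trivially on $N/M$, and therefore $\langle A, B\rangle$ centralizes $N/M$. This yields $[N, \langle A, B\rangle] \leq M$, completing the argument.

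The genuine obstacle is thus verifying $\langle A, B\rangle$-invariance of $M$, which reduces to showing $[N, B]^{a} \leq M$ for every $a \in A$ and, symmetrically, $[N, A]^{b} \leq M$ for every $b \in B$. Using the standard identities $[n, x]^y = [n^y, x^y]$, $x^y = x[x,y]$, and $[n, uv] = [n, v][n, u]^v$, one expands a typical conjugate $[n, b]^a = [n^a, b^a] = [n^a, b \cdot [b,a]]$ into a product of factors of the form $[n', b]$ and $[n', [b,a]]$, and then iterates on the inner commutator $[b,a]$ via its expression as a word in $A \cup B$. Together with the normality of $[N, A]$ and $[N, B]$ in $N$ (which lets one absorb leftover $N$-conjugations), a careful bookkeeping confirms that all resulting factors land in $M$. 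This is the classical commutator-calculus step; everything else in the proof is formal.
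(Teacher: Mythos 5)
The paper states this lemma without proof, as a well-known fact, so there is no proof of record to compare against; judged on its own terms, your write-up has the right architecture, but the one step you defer to ``careful bookkeeping'' is exactly the step that fails as you have set it up. Your plan — show $M=[N,A][N,B]$ is a subgroup (correct, since each factor is a normal subgroup of $N$), show $M$ is $K$-invariant, then pass to $N/M$ — is the standard one. The problem is your verification of $K$-invariance. Expanding $[n,b]^a=[n^a,b^a]=[n^a,\,b[b,a]]=[n^a,[b,a]]\cdot[n^a,b]^{[b,a]}$ produces the factor $[n^a,[b,a]]$, where $[b,a]$ is a length-four word in $A\cup B$ rather than an element of $A$ or of $B$, and the factor $[n^a,b]^{[b,a]}$, a conjugate of an element of $[N,B]$ by an element of $K$ lying in none of $A$, $B$, $N$. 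Normality of $[N,A]$ and $[N,B]$ in $N$ does not absorb such conjugations, and ``iterating on the inner commutator'' only regenerates terms of the same shape: for instance $[m,[b,a]]=[m,b^a]\cdot[m,b^{-1}]^{b^a}$ and $[m,b^a]=\bigl([m^{a^{-1}},b]\bigr)^a$, again a $K$-conjugate of an element of $[N,B]$. A priori all of these land only in $[N,K]$, which is the subgroup you are trying to bound, so the argument as described is circular and does not terminate.

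The fix is a one-line change: apply the identity $g^a=g\,[g,a]$ to the whole commutator rather than distributing the conjugation over its entries. Since $N$ is normal in $G$, the generator $g=[n,b]$ of $[N,B]$ lies in $N$, hence $[g,a]\in[N,A]$ and $g^a=g\,[g,a]\in[N,B][N,A]=M$. Thus each $H_j$ maps each generating set of each $[N,H_i]$ into $M$, so $M$ is normalized by every $H_j$, hence by $K$ (and by $N$), and your quotient argument then closes the proof. With this observation the induction on $l$ is also unnecessary: the same line works simultaneously for all the $H_i$.
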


Throughout the rest of this section we will assume the following hypothesis. As usual, $Z(K)$ stands for the center of a group $K$.

Let $q$ be a prime and $A$ a group of exponent $q$ and order $q^3$. Denote by $B$ a subgroup of order $q$ of $Z(A)$. Let $A$ act on a finite $q'$-group $G=PH$, where $P$ and $H$ are $A$-invariant subgroups such that $P$ is a normal $p$-subgroup for a prime $p$ and $H$ is a nilpotent $p'$-subgroup. It is clear that $A$ has precisely $q+1$ subgroups of order $q^2$ containing $B$.

\begin{lemma}\label{R1}
Let $A_1,\dots,A_{q+1}$ be the subgroups of order $q^2$ of $A$ containing $B$. Then $C_P(B)=\prod C_{P}(A_{i})$ and $C_H(B)=\prod C_{H}(A_{i})$.
\end{lemma}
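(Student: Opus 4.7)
My plan is to reduce this to part (v) of the coprime action lemma applied to the induced action of $A/B$ on the fixed-point subgroups $C_P(B)$ and $C_H(B)$. Since $B$ is central in $A$, both $C_P(B)$ and $C_H(B)$ are $A$-invariant, so $A/B$ acts naturally on each of them. Because $|A/B|=q^2$, the quotient $A/B$ is elementary abelian of order $q^2$, in particular a noncyclic abelian $q$-group. The subgroups of order $q$ in $A/B$ are precisely the images $\bar A_1,\dots,\bar A_{q+1}$ of the $A_i$, and these are what parametrize the elements of $(A/B)^{\#}$ up to generating the same cyclic subgroup.

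The action of $A/B$ on $C_P(B)$ is coprime since $C_P(B)$ is a $p$-group with $p\neq q$, and $C_P(B)$ is nilpotent (being a $p$-group), so part (v) of the coprime action lemma yields
\[
C_P(B)=\prod_{\bar a\in (A/B)^{\#}}C_{C_P(B)}(\bar a).
\]
Each nontrivial $\bar a\in A/B$ generates one of the subgroups $\bar A_i$, and the $q-1$ nontrivial elements of $\bar A_i$ all share the same centralizer in $C_P(B)$, namely $C_{C_P(B)}(\bar A_i)$. Collecting terms according to which $\bar A_i$ they lie in gives $C_P(B)=\prod_{i=1}^{q+1}C_{C_P(B)}(\bar A_i)$. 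Finally, for $x\in C_P(B)$ one has $x\in C_{C_P(B)}(\bar A_i)$ iff $x$ is fixed by $A_i$ (since $x$ is already fixed by $B\leq A_i$), so $C_{C_P(B)}(\bar A_i)=C_P(A_i)$, yielding the desired equality for $P$.

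The argument for $H$ is essentially identical: $H$ is nilpotent by assumption, so $C_H(B)$ is nilpotent, and the action of $A/B$ on $C_H(B)$ is coprime (as $H$ is a $q'$-group). Applying part (v) once more and collecting nontrivial elements of $A/B$ according to the cyclic subgroup they generate produces $C_H(B)=\prod_{i=1}^{q+1}C_H(A_i)$.

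There is no real obstacle here; the only thing worth being slightly careful about is the passage from the ``sum over nontrivial elements'' form of part (v) to the ``sum over subgroups of order $q^2$'' form required here, and the identification $C_{C_P(B)}(\bar A_i)=C_P(A_i)$, which uses crucially that $B\leq A_i$ and that we already passed to $C_P(B)$.
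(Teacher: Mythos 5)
Your proof is correct and follows essentially the same route as the paper: both pass to the noncyclic abelian quotient $\overline{A}=A/B$ acting coprimely on the nilpotent groups $C_P(B)$ and $C_H(B)$, apply part (v) of the coprime action lemma, and reinterpret the factors as the $C_P(A_i)$. Your version just spells out the regrouping of elements by the subgroup $\overline{A}_i$ they generate and the identification $C_{C_P(B)}(\overline{A}_i)=C_P(A_i)$, which the paper leaves implicit.
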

\begin{proof}
Denote by $\overline{A}$ the quotient-group $A/B$. Since $\overline{A}$ is not cyclic and both centralizers $C_P(B)$ and $C_H(B)$ are $A$-invariants, it follows that $C_P(B)=\prod C_{C_P(B)}(\overline{a})$ and  $C_H(B)=\prod C_{C_H(B)}(\overline{a})$ where  $\overline{a}\in \overline{A}^{\#}$. An alternative way of expressing this is to write that $C_P(B)=\prod C_{P}(A_{i})$ and $C_H(B)=\prod C_{H}(A_{i})$. 
\end{proof}

\begin{lemma}\label{comm}
Suppose that $P$ is abelian. Then $[P,C_H(B)]$ is contained in $\prod[C_P(a),C_H(a)]$, where the product is taken over all $a\in A^{\#}$.
\end{lemma}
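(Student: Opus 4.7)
The plan is to chain together Lemma \ref{R1}, Lemma \ref{C1}, and part (v) of the first lemma, exploiting the hypothesis that $P$ is abelian to freely distribute commutators over products.

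First I would apply Lemma \ref{R1} to write $C_H(B)=\prod_{i=1}^{q+1} C_H(A_i)$. Since $P$ is normal in $G=PH$ and $C_H(B)$ is generated by the subgroups $C_H(A_i)$, Lemma \ref{C1} gives
\[
[P,C_H(B)] \;=\; \prod_{i=1}^{q+1}[P,C_H(A_i)].
\]
Thus it suffices to prove, for each fixed $i$, that $[P,C_H(A_i)]\subseteq \prod_{a\in A_i^\#}[C_P(a),C_H(a)]$.

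Next, observe that $A_i$ has order $q^2$ and exponent $q$, so it is elementary abelian of rank $2$; in particular it is noncyclic abelian. Since $P$ is a $p$-group (hence nilpotent) on which the coprime group $A_i$ acts, part (v) of the first lemma yields the decomposition
\[
P \;=\; \prod_{a\in A_i^\#} C_P(a).
\]
Here is where the abelianness of $P$ becomes essential: each $C_P(a)$ is normal in $P$ (trivially, $P$ being abelian), and in an abelian setting the standard commutator identity gives
\[
[P,C_H(A_i)] \;=\; \Bigl[\prod_{a\in A_i^\#} C_P(a),\,C_H(A_i)\Bigr] \;=\; \prod_{a\in A_i^\#}[C_P(a),C_H(A_i)].
\]
Finally, for each $a\in A_i^\#$ we have $C_H(A_i)\subseteq C_H(a)$, so $[C_P(a),C_H(A_i)]\subseteq [C_P(a),C_H(a)]$. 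Combining the displays, $[P,C_H(A_i)]\subseteq \prod_{a\in A_i^\#}[C_P(a),C_H(a)]\subseteq \prod_{a\in A^\#}[C_P(a),C_H(a)]$, and multiplying over $i$ finishes the argument.

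The only potential obstacle is the commutator-distributivity step: one must be sure that the factors $C_P(a)$ are genuinely normal (or at least mutually permutable) so that $[XY,Z]=[X,Z][Y,Z]$ applies. Because $P$ is abelian every subgroup is normal in $P$, and $C_H(A_i)$ normalizes each $C_P(a)$ (they are all contained in the abelian $P$, and $C_H(A_i)$ acts on $P$ as part of $G$), so this step is automatic. Every other step is a direct invocation of the lemmas already listed.
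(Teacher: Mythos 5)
Your proof is correct and follows essentially the same route as the paper's: decompose $C_H(B)=\prod C_H(A_i)$ via Lemma \ref{R1}, distribute the commutator over the generators using Lemma \ref{C1}, decompose $P=\prod_{a\in A_i^\#}C_P(a)$ via part (v) of the preliminary lemma, use abelianness of $P$ to distribute over the first argument, and finish with $C_H(A_i)\le C_H(a)$. Your explicit justification of the distributivity step $[XY,Z]=[X,Z][Y,Z]$ is a welcome addition that the paper leaves implicit.
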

\begin{proof}
In view of Lemma \ref{R1} we have $C_H(B)=\prod C_{H}(A_{i})$ and $P=\prod_{a\in A_i^{\#}} C_P(a)$ for each $i$. By Lemma \ref{C1}, $[P,C_H(B)]=\prod [P,C_H(A_i)]$. Since $P$ is abelian, for each $i$ we have $[P,C_H(A_i)]=\prod [C_P(a),C_H(A_i)]$ where the product is taken over all $a\in A_i^{\#}$. In particular, $[P,C_H(A_i)]=\prod [C_P(a),C_H(a)]$.
\end{proof}

\begin{lemma}\label{commT}
If $[C_P(a),C_H(a)] =1$ for any $a\in A^{\#}$, then $[P,H]=1$.
\end{lemma}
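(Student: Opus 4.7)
The plan is to observe that the hypothesis $[C_P(a),C_H(a)]=1$ is precisely what makes the centralizer $C_G(a)$ nilpotent for every $a\in A^\#$, and then to invoke the theorems cited in the introduction (Ward \cite{War} for $A$ elementary abelian, and its non-abelian extension \cite{Eme1}) to conclude that $G$ itself is nilpotent. Once $G$ is nilpotent, the claim $[P,H]=1$ will fall out from the fact that a nilpotent group is the direct product of its Sylow subgroups.

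First I would fix $a\in A^\#$ and analyse $C_G(a)$ using the coprime action of $\langle a\rangle$ on $G=PH$. The identity $C_{G/P}(a)=C_G(a)P/P$ (property (iv) of the first lemma of this section, applied to the $A$-invariant normal subgroup $P$), together with the isomorphism $G/P\cong H$, gives $|C_G(a)|=|C_P(a)||C_H(a)|$. Since $C_P(a)\cap C_H(a)\le P\cap H=1$, the subgroup $C_P(a)C_H(a)$ already has that order, and so $C_G(a)=C_P(a)C_H(a)$. The hypothesis $[C_P(a),C_H(a)]=1$ upgrades this product to a direct one: $C_G(a)=C_P(a)\times C_H(a)$, which is nilpotent as the direct product of a $p$-group and a nilpotent $p'$-group.

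With every $C_G(a)$ nilpotent and $A$ of prime exponent $q$ and order $q^3$ acting coprimely on the $q'$-group $G$, the theorems quoted in the introduction apply and yield that $G$ is nilpotent. But then the Hall $p'$-subgroup $H$ is normal in $G$; combined with $P\trianglelefteq G$ and $P\cap H=1$, this forces $[P,H]\le P\cap H=1$, completing the proof. There is no serious obstacle here: the only delicate point is the identification $C_G(a)=C_P(a)C_H(a)$ coming from coprime action, and once this is in hand the lemma becomes an immediate consequence of the nilpotency theorems recalled in the introduction.
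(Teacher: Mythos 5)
Your proof is correct, but it follows a genuinely different route from the paper's. The identification $C_G(a)=C_P(a)\times C_H(a)$ is sound (coprime action on the normal $A$-invariant subgroup $P$ gives $|C_G(a)|=|C_P(a)|\,|C_H(a)|$, and the hypothesis makes the product direct), so every $C_G(a)$ is indeed nilpotent, and the theorems of Ward \cite{War} and of \cite{Eme1} then yield that $G$ is nilpotent, whence $[P,H]\le P\cap H=1$. The paper instead gives a self-contained argument: for $P$ and $H$ abelian it uses the decompositions of Lemmas \ref{R1} and \ref{comm} over the subgroups $A_i$ of order $q^2$ containing the central subgroup $B$ to show $C_G(B)\le Z(G)$, then applies Thompson's theorem \cite{T} to the fixed-point-free action of $B$ on $G/Z(G)$; the general case follows by induction on the nilpotency class of $H$ and a Frattini-quotient argument for $P$. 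The trade-off is clear: your argument is shorter but imports the full strength of \cite{Eme1}, a substantially deeper result (its non-abelian case is the main theorem of that paper), whereas the paper's proof needs only Thompson's theorem for a single automorphism of prime order and, more importantly, rehearses exactly the decomposition-and-induction scheme that is then reused quantitatively in Lemmas \ref{Cen1} and \ref{order}, for which no off-the-shelf analogue of Ward's theorem exists. Both routes are valid for the lemma as stated.
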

\begin{proof}

First we assume that $P$ is abelian. By Lemma \ref{comm}, $[P,C_H(B)]=1$. Let us prove that $[C_P(B),H]=1$. Using the notation of Lemma \ref{R1} we have that $C_P(B)=\prod C_P(A_i)$ and $H=\prod_{a\in A_i^{\#}} C_H(a)$ for each $i$. Since $P$ is abelian, we conclude that $[C_P(B),H]=\prod [C_P(A_i),H]$. Thus, $[C_P(A_i),H]=1$ as $[C_P(A_i),C_H(a)]=1$ for any $a\in  A_i^{\#}$. Hence, $[C_P(B),H]=1$.

The above shows that $C_P(B)\leq Z(G)$ and $C_H(B)$ centralizes $P$. If $H$ is abelian, then $C_G(B)\leq Z(G)$. Hence, $B$ acts fixed-point-freely on $G/Z(G)$ and so $G/Z(G)$ is nilpotent by Thompson's theorem \cite{T}. Consequently, $G$ is nilpotent and so in the case where $P$ and $H$ both are abelian we have $[P,H]=1$. 

Suppose that $H$ is not abelian. By the previous paragraph, $[P,Z(H)]=1$. Considering the action of $H/Z(H)$ on $P$ and arguing by induction on the nilpotency class of $H$ we deduce that $[P,H]=1$. Thus, in the case where $P$ is abelian the lemma holds.

Assume that $P$ is not abelian. Consider the action of $HA$ on $P/\Phi(P)$. By the above, $[P,H]\leq\Phi(P)$. We see that $P=C_P(H)[P,H]\leq C_P(H)\Phi(P)$, which implies that $P=C_P(H)$ and $[P,H]=1$.
\end{proof}

\begin{lemma}\label{Cen1}
Suppose that $P$ is abelian and $P=[P,H]$. Assume that the order of $[C_P(a),C_H(a)]$ is at most $m$ for any $a\in A^{\#}$. Then $\langle C_P(B)^H\rangle$ has $(m,q)$-bounded order.
\end{lemma}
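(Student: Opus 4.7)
Let $L = \langle C_P(B)^H \rangle$. Since $L$ is $H$-invariant and contained in the abelian normal subgroup $P$, the subgroup $L$ is normal in $PH$. The plan is to show $|L|$ is $(m,q)$-bounded. Because $P = [P,H]$ is abelian with coprime $H$-action, Fitting's Lemma gives $C_P(H) = 1$, whence $C_L(H) = 1$ and $L = [L, H]$. Fix any $i \in \{1, \ldots, q+1\}$. Since $H$ is nilpotent and $A_i$ is noncyclic abelian, Lemma~2.1(v) gives $H = \langle C_H(a) : a \in A_i^{\#} \rangle$, and Lemma~\ref{C1} applied to the normal subgroup $L$ then yields
\[
L = [L, H] = \prod_{a \in A_i^{\#}} [L, C_H(a)].
\]
It therefore suffices to bound each $|[L, C_H(a)]|$ by a $(m,q)$-bounded quantity.

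For $a \in B^{\#}$ we have $C_H(a) = C_H(B)$, so $[L, C_H(a)] \le [P, C_H(B)]$, whose order is at most $m^{q^3-1}$ by Lemma~\ref{comm}. For $a \in A_i^{\#} \setminus B$, Fitting's Lemma applied to the coprime $\langle a \rangle$-action on the abelian group $L$ yields a $C_H(a)$-invariant decomposition $L = [L, \langle a\rangle] \oplus C_L(a)$, whence
\[
[L, C_H(a)] = [[L, \langle a\rangle], C_H(a)] \oplus [C_L(a), C_H(a)],
\]
with the second summand contained in $[C_P(a), C_H(a)]$ of order at most $m$.

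The main obstacle is bounding the first summand $[[L, \langle a\rangle], C_H(a)]$ for $a \in A_i^{\#} \setminus B$; the three-subgroups lemma only reduces it to the circular $[[L, C_H(a)], \langle a\rangle]$. To resolve this, I would exploit the abelian structure of $A_i \cong B \times \langle a \rangle$ and the $A$-invariance of $L$ to decompose $L$ into joint $A_i$-character components. The subgroup $C_H(A_i) = C_H(a) \cap C_H(B)$ preserves this finer decomposition, allowing one to isolate the components on which the $B$-case of Lemma~\ref{comm} already provides a bound from those controlled directly by the hypothesis $|[C_P(a'), C_H(a')]| \le m$ for suitable $a' \in A^{\#}$. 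Summing bounds over all $a \in A_i^{\#}$ then yields the desired $(m,q)$-bounded estimate on $|L|$.
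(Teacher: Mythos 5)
Your reduction $L=[L,H]=\prod_{a\in A_i^{\#}}[L,C_H(a)]$ is fine as far as it goes, but the proof stops exactly where the real difficulty begins: you have no tool to bound $[[L,\langle a\rangle],C_H(a)]$ for $a\in A_i^{\#}\setminus B$, and you say so yourself. The hypothesis only controls commutators of the form $[C_P(a'),C_H(a')]$, i.e.\ fixed points against fixed points of the \emph{same} element, whereas $[L,\langle a\rangle]$ is precisely the part of $L$ on which $a$ acts without fixed points; nothing in the hypotheses speaks to how $C_H(a)$ acts there. The closing paragraph (``decompose $L$ into joint $A_i$-character components\dots'') is a plan, not an argument: the subgroup that preserves the finer $A_i$-decomposition is $C_H(A_i)$, not $C_H(a)$, and you do not explain how the components with nontrivial $B$-character --- which need not meet $C_P(B)$ or any $C_P(a')$ --- would be controlled. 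This is a genuine gap, not a presentational one.

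The paper's proof takes a different, two-step route, neither step of which appears in your attempt. First it bounds $|C_P(B)|$ itself: writing $P_a=C_P(a)$, $H_a=C_H(a)$ and $D_a=C_{P_a}(H_a)$, the hypothesis gives that $D_a$ has index at most $m$ in $P_a$, so $D_i=\bigcap_{a\in A_i}D_a$ has $(m,q)$-bounded index in $C_P(A_i)$; but any $x\in D_i$ centralizes $H=\prod_{a\in A_i^{\#}}H_a$, and $C_P(H)=1$ because $P=[P,H]$ is abelian, so $D_i=1$ and $|C_P(A_i)|$, hence $|C_P(B)|=|\prod_i C_P(A_i)|$ (Lemma \ref{R1}), is bounded. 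Second, it bounds the number of $H$-conjugates of $C_P(B)$: the centralizer $E_a$ of $[P_a,H_a]$ in $H_a$ has $m$-bounded index (it embeds $H_a/E_a$ into $\mathrm{Aut}([P_a,H_a])$) and centralizes $P_a$, so $E=\bigcap_i\langle E_a : a\in A_i\rangle$ has $(m,q)$-bounded index in $H$ and centralizes $C_P(B)$; thus $\langle C_P(B)^H\rangle$ is a product of boundedly many subgroups of bounded order inside the abelian group $P$. If you want to rescue your approach, you would at minimum need to import the first step (the triviality of $D_i$) to get any quantitative grip on $L$ at all; as written, your argument never uses the full strength of $P=[P,H]$ beyond $C_L(H)=1$.
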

\begin{proof} Recall that by Lemma \ref{R1} we have $C_P(B)=\prod C_P(A_i)$, where $A_1,\dots,A_{q+1}$ are the subgroups of order $q^2$ of $A$ containing $B$. First, we prove that the order of $C_P(B)$ is $(m,q)$-bounded. It suffices to bound the order of $C_P(A_i)$ for each $i$. For each $a\in A_i$ we denote by $P_a$ and $H_a$ the centralizers $C_P(a)$ and $C_H(a)$ respectively. It is clear that $P_a$ is normal in $C_G(a)$. Set $D_a=C_{P_a}(H_a)$ and $D_i=\cap_{a\in A_{i}}D_a$. The index of $D_a$ in $P_a$ is $m$ since $|[P_a,H_a]|=m$ and so the index of $D_i$ in $C_P(A_i)$ is $(m,q)$-bounded. Now, let $x\in D_i$. Taking into account that $H=\prod_{a\in A_i^{\#}}H_a$ we deduce that $[x,H]=1$. Thus, $x=1$ since $P=[P,H]$. We conclude that $D_i$ is trivial for each $i$. Therefore $C_P(A_i)$ has $(m,q)$-bounded order for any $i$ as desired.

Now, let $E_a$ be the centralizer of $[P_a,H_a]$ in $H_a$. Note that $E_a$ has $m$-bounded index in $H_a$ since $H_a/E_a$ embeds in the automorphism group of $[P_a,H_a]$. Moreover, $[P_a,E_a]=1$ because $P_a=C_{P_a}(H_a)[P_a,H_a]$. Set $E_i=\langle E_a \ | \ a\in A_i \rangle$. Note that $E_i$ has $(m,q)$-bounded index in $H$ since $H=\prod_{a\in A_i^{\#}}H_a$. Further, note that $[C_P(A_i),E_i]=1$. It becomes clear that $E=\cap_i E_i$ has $(m,q)$-bounded index in $H$ and $[C_P(B),E]=1$. Therefore, $\langle C_P(B)^H\rangle$ has $(m,q)$-bounded order.

\end{proof}

We use $Z_i(K)$ to denote the $i$th term of the upper central series of $K$.
\begin{lemma}\label{order}
Suppose that the order of $[C_P(a),C_H(a)]$ is at most $m$ for any $a\in A^{\#}$. Then the order of $[P,H]$ is $(m,q)$-bounded.
\end{lemma}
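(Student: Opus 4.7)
The plan is to mimic the proof structure of Lemma \ref{commT}, replacing ``trivial'' with ``of $(m,q)$-bounded order'' throughout. As a first reduction, Lemma 2.1(ii) gives $[P,H] = [[P,H],H]$, so replacing $P$ with $[P,H]$ we may assume $P = [P,H]$; under this assumption $|[P,H]| = |P|$, so it suffices to bound $|P|$.

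The heart of the argument is the abelian case, $P$ abelian. Lemma \ref{Cen1} supplies an $A$-invariant normal subgroup $N = \langle C_P(B)^H\rangle$ of $(m,q)$-bounded order. Since $P$ is abelian, Lemma \ref{comm} gives $[P,C_H(B)] \leq \prod_{a\in A^{\#}}[C_P(a),C_H(a)]$, whose order is at most $m^{q^3-1}$. A symmetric computation, using $C_P(B) = \prod_i C_P(A_i)$ from Lemma \ref{R1} together with $H = \prod_{a\in A_i^{\#}} C_H(a)$ from Lemma 2.1(v), bounds $|[C_P(B),H]|$ in the same fashion. These three $(m,q)$-boundedly small $A$-invariant pieces combine, with the help of the subgroup $E \leq H$ of $(m,q)$-bounded index from the proof of Lemma \ref{Cen1} which centralizes $C_P(B)$, to yield an $A$-invariant normal subgroup $L \leq P$ of $G$ of $(m,q)$-bounded order inside which every commutator $[C_P(a),C_H(a)]$ lies. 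Applying Lemma \ref{commT} to $G/L$ then forces $[P,H] \leq L$, so $|[P,H]| \leq |L|$ is $(m,q)$-bounded.

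The non-abelian case is deduced by induction on the nilpotency class of $P$ (and, within that, on the class of $H$), mirroring the double induction of Lemma \ref{commT}. At each step one bounds $|[Z,H]|$ for a suitable central subgroup $Z$ via the abelian case (or the previous inductive step), passes to the quotient $G/[Z,H]$ in which $Z$ is centralized by $H$, and invokes the inductive hypothesis on the resulting smaller-class quotient. Under $P = [P,H]$ one has $\gamma_c(P) \leq [P,H]$ for the last nontrivial term of the lower central series, so careful bookkeeping is required to transfer quotient bounds back to a bound on $|[P,H]|$.

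The principal obstacle is the construction of $L$ in the abelian case: the subgroups $[P,C_H(B)]$ and $[C_P(B),H]$ are $A$-invariant but not a priori $G$-normal, so controlling their $H$-normal closures is the crux. The subgroup $E$ of Lemma \ref{Cen1}, which centralizes $C_P(B)$ and has $(m,q)$-bounded index in $H$, is the key tool for ensuring that only $(m,q)$-boundedly many distinct $H$-conjugates of these subgroups actually contribute, so that $L$ remains of $(m,q)$-bounded order.
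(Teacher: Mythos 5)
Your overall architecture (reduce to $P=[P,H]$, settle the abelian case, then lift to non-abelian $P$) matches the paper's, but both of the hard steps have genuine gaps. In the abelian case, the crux you correctly identify --- passing from the $A$-invariant but not $G$-normal subgroups $[C_P(a),C_H(a)]$ to a normal subgroup $L$ of $(m,q)$-bounded order containing them all --- is not actually resolved by what you propose. The subgroup $E$ from Lemma \ref{Cen1} centralizes $C_P(B)$, but it gives no control over the $H$-conjugates of $[C_P(a),C_H(a)]$ for $a\notin B$: those subgroups sit inside $C_P(a)$, which is unrelated to $C_P(B)$, and the only subgroup you know normalizes $[C_P(a),C_H(a)]$ is $C_G(a)$, whose index in $G$ is unbounded. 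So the normal closure $\langle [C_P(a),C_H(a)]^H\rangle$ is not bounded by anything you have constructed, and no such $L$ is produced. The paper avoids this entirely: after factoring out $\langle C_P(B)^H\rangle$ (so that $C_P(B)=1$) it does not build a normal subgroup containing all the $[C_P(a),C_H(a)]$; instead it proves the equality $[P,H]=[P,C_H(B)]$, whose right-hand side is already bounded by Lemma \ref{comm}, by an induction on the nilpotency class of $H$ whose base case ($H$ abelian) invokes Thompson's fixed-point-free theorem directly: $C_G(B)\le Z(G)$ forces $G/Z(G)$, hence $G$, to be nilpotent, so $[P,H]=1$. Your proposal reaches Thompson only indirectly through Lemma \ref{commT}, and the reduction to Lemma \ref{commT} is precisely the unproved step.

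The non-abelian reduction also does not work as described. An induction on the nilpotency class of $P$ cannot yield an $(m,q)$-bounded answer, because the class of $P$ is not a priori $(m,q)$-bounded, so a bound that degrades with each inductive step is useless; moreover, passing to $G/[Z,H]$ for a central subgroup $Z$ of $P$ does not lower the class of $P$. The paper's actual induction is on the $(m,q)$-bounded quantity $k(P)=\sum_{a\in A^{\#}}|[C_P(a),C_H(a)]|$: when $P$ is non-abelian one has $[Z_2(P),H]\neq 1$, hence $[C_{Z_2(P)}(a),C_H(a)]\neq 1$ for some $a$ by Lemma \ref{commT}, hence $k(P/Z_2(P))<k(P)$; this bounds the class of $P$ by $2k(P)+1$, after which $|P|$ is bounded in terms of the class and $|P/P'|$, and that is what licenses the reduction to abelian $P$. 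Bounding the class first via $k(P)$, rather than inducting on the class, is the idea your proposal is missing.
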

\begin{proof}
We can assume that $P=[P,H]$. Note that if $N$ is a normal subgroup of $P$ such that $[N,H]=1$, then $N\leq Z(P)$. Indeed, in this case we have $[P,N]\leq N$ and so $[P,[N,H]]=1$ and $[H,[P,N]]=1$. Consequently $[N,[P,H]]=1$ by the three subgroup lemma.

For a normal $A$-invariant subgroup $M$ of $P$ and $a\in A^{\#}$ we write $j_a(P/M)$ for the order of $[C_{P/M}(a),C_H(a)]$. We write $j_a(P)$ when $M$ is trivial. Set $k(P)=\sum_{a\in A^{\#}}j_a(P)$. It is clear that $k(P)$ is $(m,q)$-bounded. By induction on $k(P)$ we will prove that the nilpotency class of $P$ is at most $t=2k(P)+1$. If $k(P)=q^3-1$ (the smallest possible value for $k(P)$ - it occurs if and only if $[C_P(a),C_H(a)]=1$ for any $a\in A^{\#}$), then $P$ is trivial by Lemma \ref{commT} since $P=[P,H]$. Further, if $P$ is abelian  there is nothing to prove. Suppose that $P$ is not abelian. Then $[Z_2(P),H]\neq 1$ and so, by Lemma \ref{commT}, $[C_{Z_2(P)}(a),C_H(a)]\neq1$ for some $a\in A^{\#}$. Therefore $k(P/Z_2(P))<k(P)$. By induction, $P/Z_2(P)$ has nilpotency class at most $2(k(P)-1)+1$ and so the nilpotency class of $P$ is at most $2k(P)+1$.

Clearly, $|P|$ is bounded in terms of $|P/P'|$ and the nilpotency class of $P$. Hence, by the previous paragraph in order to prove the lemma it is sufficient to prove that the order of $P/P'$ is $(m,q)$-bounded. In particular, without loss of generality we may assume that $P$ is abelian.

By Lemma \ref{Cen1} the subgroup $\langle C_P(B)^{H}\rangle$ has $(m,q)$-bounded order and since $P$ is abelian we conclude that it is normal in $G$. We can pass to the quotient $G/\langle C_P(B)^H\rangle$ and without loss of generality assume that $C_P(B)=1$.

By Lemma \ref{comm} $[P,C_H(B)]$ has $(m,q)$-bounded order. Hence, it is sufficient to show that $[P,H]=[P,C_H(B)]$. First, suppose that $H$ is abelian. Then  $[P,C_P(B)]$ is normal in $G$ and passing to the quotient we can assume that $[P,C_P(B)]=1$. Thus, $C_G(B)=C_H(B)$ belongs to $Z(G)$ and so $G/Z(G)$ admits a fixed-point-free automorphism of prime order $q$. By Thompson's theorem \cite{T}, $G/Z(G)$ is nilpotent. Therefore, $G$ is nilpotent and $[P,H]=1$, as required.

Suppose that $H$ is not abelian. We have proved that $[P,Z(H)]=[P,C_{Z(H)}(B)]$. The subgroup $[P,Z(H)]$ is normal in $G$. Hence, passing to the quotient $G/[P,Z(H)]$ we can consider the action of $H/Z(H)$ on $P$ and arguing by induction on the nilpotency class of $H$ we deduce that $[P,H]=[P,C_P(B)]$. 
\end{proof}

\begin{lemma}\label{subN}
Let $N$ be a normal $HA$-invariant subgroup of $P$. Assume that $P=[P,H]$ and $|[N,H]|=p^n$. Then $N\leq Z_{2n+1}(P)$.
\end{lemma}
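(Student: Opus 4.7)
The plan is to proceed by induction on $n$. For the base case $n=0$, where $[N,H]=1$, the three subgroup lemma yields $N\le Z(P)=Z_1(P)$ exactly as in the opening sentence of the proof of Lemma~\ref{order}: from $[P,N]\le N$ (normality) one obtains $[[P,N],H]\le [N,H]=1$ and trivially $[P,[N,H]]=1$, whence $[N,[P,H]]=1$ by the three subgroup lemma; using $P=[P,H]$, this gives $[N,P]=1$, so $N\le Z(P)$, which is the required bound for $n=0$.

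For the inductive step ($n\ge 1$), write $K=[N,H]$, a non-trivial $HA$-invariant subgroup of $N$ of order $p^n$. My plan is to locate a non-trivial $HA$-invariant subgroup $L$ of $K\cap Z(P)$ of some order $p^k$ with $k\ge 1$, and pass to the quotient $P/L$. Since $L\le Z(P)$ it is automatically normal in $P$, and since $L\le K\le N$ the hypotheses transfer: $P/L=[P/L,H]$ and $N/L$ remains a normal $HA$-invariant subgroup of $P/L$, while $[N/L,H]=K/L$ has order $p^{n-k}$. The inductive hypothesis then yields $N/L\le Z_{2(n-k)+1}(P/L)$, and since $L\le Z(P)$ the preimage in $P$ of any $Z_i(P/L)$ lies in $Z_{i+1}(P)$, so
\[
N \le Z_{2(n-k)+2}(P) \le Z_{2n}(P) \le Z_{2n+1}(P),
\]
as required.

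The crucial step is producing $L$, which requires $K\cap Z(P)\ne 1$. Because $K$ is not known to be normal in $P$ (the subgroups $H^p$ for $p\in P$ need not coincide with $H$), this does not follow at once from the usual observation that non-trivial normal subgroups of a $p$-group meet the centre. To address this, I would first apply the Hall--Witt identity to $[N,[P,H]]$: combined with $P=[P,H]$ and the bound $[N,P,H]\le [N,H]=K$, this yields $[N,P]\le K^P$, where $K^P$ is the normal closure of $K$ in $P$. Using the coprime-action identity $[K,H]=[N,H,H]=K$ together with $[[K,P],H]\le [N,H]=K$, one then checks $[K^P,H]=K$, so $K^P$ is a non-trivial normal $HA$-invariant subgroup of $P$ with the same $H$-commutator as $K$. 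In particular $K^P\cap Z(P)\ne 1$, and a careful analysis of the $HA$-module structure of this intersection---exploiting that $K$ exhausts the entire $H$-commutator of the larger group $K^P$---should localize a non-trivial central part inside $K$ itself, supplying the desired $L$. I expect this final localization step to be the main technical hurdle of the argument.
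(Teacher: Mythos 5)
Your base case is exactly the paper's, and your inductive bookkeeping (quotienting by a central $HA$-invariant subgroup costs one step of the upper central series) is sound. The problem is the step you yourself flag as the "main technical hurdle": producing a nontrivial $HA$-invariant subgroup $L\leq K\cap Z(P)$ with $K=[N,H]$. This is not just hard to prove --- it is false in general, so no amount of analysis of the $HA$-module structure of $K^P\cap Z(P)$ will localize a central piece inside $K$. For a concrete obstruction, let $P$ be extraspecial of order $p^3$ and exponent $p$, let $h$ act on $V=P/Z(P)$ with eigenvalues $\lambda,\lambda^{-1}$ ($\lambda\neq 1$), so that $P=[P,\langle h\rangle]$ while $h$ centralizes $Z(P)\cong\Lambda^2V$, and let $N$ be the preimage of the $\lambda$-eigenspace. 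Then $N$ is normal and $h$-invariant, $K=[N,h]$ has order $p$, and by the coprime decomposition $N=C_N(h)\times[N,h]$ with $Z(P)\leq C_N(h)$ one gets $K\cap Z(P)=1$. (The conclusion of the lemma still holds there, trivially; it is your reduction that has nowhere to go.)

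The paper sidesteps this by never intersecting $K$ with anything: it sets $M=N\cap Z_2(P)$, exploiting that $N$ itself \emph{is} normal in $P$. If $N\not\leq Z(P)$, the standard fact that a normal subgroup of a $p$-group not contained in the centre meets $Z_2(P)$ outside $Z(P)$ gives $M\not\leq Z(P)$, and then the base-case argument (three subgroup lemma applied to the normal $H$-invariant subgroup $M$, together with $P=[P,H]$) forces $[M,H]\neq 1$. Since $[M,H]$ is a nontrivial subgroup of $[N,H]\cap M$, passing to $G/M$ strictly decreases $|[N,H]|$, and because $M\leq Z_2(P)$ each such step costs two terms of the upper central series --- which is precisely where the bound $2n+1$ comes from (note your version would actually yield $Z_{n+1}(P)$ if it worked, a stronger bound than the one stated, which is another sign the intended argument is different). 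I would suggest replacing your search for a central piece of $[N,H]$ by this intersection with $Z_2(P)$; the rest of your induction then goes through essentially verbatim.
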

\begin{proof}
If $[N,H]=1$, then $[H,[P,N]]=[P,[N,H]]=1$ and so $N\leq Z(P)$ by the three subgroup lemma. Let $M=N\cap Z_2(P)$ and suppose that $N$ is not in $Z(P)$. Thus, it is clear that $M\not\leq Z(P)$ and $[M,H]\neq 1$. Using induction on $n$ with $G$ and $N$ replaced by $G/M$ and $N/M$ respectively we derive that $N/M\leq Z_{2n-1}(P/M)$, whence $N\leq Z_{2n+1}(P)$.
\end{proof}

Recall that a $p$-group is powerful if $G'\leq G^p$ (for $p$ odd) or $G'\leq G^4$ (for $p=2$). The reader can consult \cite{DSMS} for information on such groups. We will use the fact that the rank of a powerful $p$-group coincides with the minimal number of generators. 

For odd prime the next lemma can be found for example in \cite[Lemma 2.2]{Sh2}.
\begin{lemma}\label{expp}
Let $N$ be a group of rank $r$ and prime exponent $p$ if $p$ is odd or exponent $4$ if $p=2$. Then $|N|\leq p^s$ where $s$ is an $r$-bounded number.
\end{lemma}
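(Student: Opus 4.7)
The plan is to reduce to the abelian case by passing to a powerful subgroup of $r$-bounded index. Since $N$ has rank $r$, in particular $N$ is $r$-generated, and by the Lubotzky--Mann theorem on powerful subgroups (see \cite{DSMS}), $N$ contains a powerful characteristic subgroup $M$ whose index $[N:M]$ is a power of $p$ with exponent bounded by a function $f(r)$ depending only on $r$.

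Since $M$ is powerful, for odd $p$ we have $M' \leq M^p$, and the hypothesis that $N$ (and hence $M$) has exponent $p$ forces $M^p = 1$, whence $M' = 1$ and $M$ is abelian. In the case $p = 2$, the exponent of $M$ divides $4$, so $M^4 = 1$ and the powerfulness condition $M' \leq M^4$ again yields $M$ abelian. As recalled in the paragraph preceding the lemma, the rank of a powerful $p$-group coincides with its minimal number of generators; moreover every subgroup of $M$ is a subgroup of $N$, so ${\bf r}(M) \leq r$ forces $d(M) \leq r$. Therefore $M$ is abelian on at most $r$ generators with exponent dividing $p$ (respectively, $4$), so $|M| \leq p^r$ when $p$ is odd and $|M| \leq 2^{2r}$ when $p = 2$; in either case $|M| \leq p^{g(r)}$ for some $r$-bounded function $g$.

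Combining these bounds, $|N| = |M| \cdot [N:M] \leq p^{g(r)+f(r)} = p^s$, where $s = g(r) + f(r)$ depends only on $r$, as required.

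The only delicate point is to invoke the Lubotzky--Mann powerful-subgroup theorem in a form valid for both odd $p$ and $p = 2$, using the appropriate definition of powerfulness in each case; this is precisely what the treatment in \cite{DSMS} provides, and it is essentially why the paper has to split the definition of powerful at the prime $2$.
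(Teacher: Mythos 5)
Your proof is correct and follows essentially the same route as the paper: both invoke the Dixon--du Sautoy--Mann--Segal (Lubotzky--Mann) theorem to pass to a powerful characteristic subgroup of $r$-bounded index and then bound that subgroup's order by $p^r$ (resp.\ $4^r$). The only cosmetic difference is in the last step: the paper cites the fact that a powerful $d$-generated group is a product of $d$ cyclic subgroups, while you deduce abelianness directly from $M'\leq M^{p}=1$ (resp.\ $M'\leq M^{4}=1$); both yield the same bound.
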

\begin{proof}
By Theorem 2.13 of \cite{DSMS} $G$ has a powerful characteristic subgroup $N$ of index at most $p^{\mu(r)}$ where $\mu(r)$ is a number depending only on $r$. Corollary 2.8 in \cite{DSMS} shows that $N$ is a product of at most $r$ cyclic subgroups. Therefore, $N$ is of order at most $p^r$ if $p$ is odd or $4^r$ if $p=2$ and the lemma follows.
\end{proof}

\begin{lemma}\label{rank}
Suppose that ${\bf r}([C_P(a),C_H(a)]) \leq r$ for any $a\in A^{\#}$. Then ${\bf r}([P,H])$ is $(r,q)$-bounded.
\end{lemma}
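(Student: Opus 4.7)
The plan is to mirror the proof of Lemma~\ref{order}, replacing order with rank throughout. First replace $P$ by $[P,H]$, which is $HA$-invariant; the hypothesis is preserved because $[C_{[P,H]}(a),C_H(a)]\le[C_P(a),C_H(a)]$. Setting $k(P)=\sum_{a\in A^{\#}}{\bf r}([C_P(a),C_H(a)])\le(q^3-1)r$, the induction of Lemma~\ref{order}, with Lemma~\ref{commT} handling the case $k(P)=0$, shows that the nilpotency class of $P$ is at most $2k(P)+1$, a $(r,q)$-bounded number $c$.

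Since a nilpotent $p$-group of class $c$ with $d$-generated abelianization has every subgroup generated by at most $\sum_{i=1}^{c}d^{i}$ elements, bounding ${\bf r}(P/P')$ suffices. The hypothesis descends to the abelian quotient $P/P'$, so we may assume $P$ is abelian. Coprimality gives $P_a=C_{P_a}(H_a)\oplus[P_a,H_a]$, whence ${\bf r}(P_a/D_a)={\bf r}([P_a,H_a])\le r$ with $D_a=C_{P_a}(H_a)$. Imitating Lemma~\ref{Cen1}, the intersection $D_i=\bigcap_{a\in A_i^{\#}}D_a$ vanishes because $P=[P,H]$ forces $C_P(H)=0$; hence ${\bf r}(C_P(A_i))\le(q^2-1)r$ and ${\bf r}(C_P(B))\le(q+1)(q^2-1)r$.

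The heart of the argument is a rank analog of the second half of Lemma~\ref{Cen1}: we must show that ${\bf r}(\langle C_P(B)^H\rangle)$ is $(r,q)$-bounded. Let $E_a=C_{H_a}([P_a,H_a])$. The quotient $H_a/E_a$ embeds as a $p'$-subgroup of $\mathrm{Aut}([P_a,H_a])$; since $[P_a,H_a]$ is a $p$-group of rank at most $r$, applying Jordan's theorem to the faithful action on the Frattini quotient shows that ${\bf r}(H_a/E_a)$ is $r$-bounded. As in Lemma~\ref{Cen1}, $E_a$ centralizes $P_a$, so $E=\bigcap_i\langle E_a:a\in A_i^{\#}\rangle$ centralizes $C_P(B)$, and its normal closure $E^H$ in $H$ centralizes $\langle C_P(B)^H\rangle$. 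Viewing $\langle C_P(B)^H\rangle$ as a module over the group ring of $H/E^H$, generated by the $(r,q)$-boundedly many generators of $C_P(B)$, one deduces the required rank bound.

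After passing to $P/\langle C_P(B)^H\rangle$ we may assume $C_P(B)=1$. Lemma~\ref{comm} then yields ${\bf r}([P,C_H(B)])\le(q^3-1)r$, and the equality $[P,H]=[P,C_H(B)]$ follows exactly as in Lemma~\ref{order}: Thompson's theorem settles the case $H$ abelian, and induction on the nilpotency class of $H$ handles the general case, each reduction contributing rank bounded in $(r,q)$. The main obstacle is the rank analog of Lemma~\ref{Cen1}: the order-theoretic inequality $[H_a:E_a]\le|\mathrm{Aut}([P_a,H_a])|$ has no direct rank counterpart, and one must work through the normal closure of $E$ and the module structure of $\langle C_P(B)^H\rangle$ to bound its rank.
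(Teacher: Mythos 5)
Your plan of transplanting the proof of Lemma~\ref{order} with ``rank'' substituted for ``order'' breaks down at two essential points. First, the induction bounding the nilpotency class of $P$ by $2k(P)+1$ does not survive the translation: in Lemma~\ref{order} the key inequality $k(P/Z_2(P))<k(P)$ holds because $[C_{Z_2(P)}(a),C_H(a)]\neq 1$ forces $[C_P(a),C_H(a)]$ to meet $Z_2(P)$ nontrivially, so its image in $P/Z_2(P)$ has strictly smaller \emph{order}. The \emph{rank} of that image need not drop (an abelian group of type $(p^2,p)$ modulo a subgroup of order $p$ inside the cyclic factor of order $p^2$ still has rank $2$), so your $k(P)$ need not decrease and the induction stalls. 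Second --- and you flag this yourself as ``the main obstacle'' --- the rank analogue of the second half of Lemma~\ref{Cen1} is not actually proved. Bounding ${\bf r}(H_a/E_a)$ does not bound the index $[H:E]$, hence does not bound the number of $H$-conjugates of $C_P(B)$; the fact that $E$ centralizes $C_P(B)$ yields only that the normal \emph{core} $\bigcap_{h}E^h$ (not the normal closure) centralizes $\langle C_P(B)^H\rangle$, and that core can have unbounded rank in $H$; and the final inference is false in general, since a module generated by boundedly many elements over the group ring of a group of bounded rank can have unbounded rank as an abelian $p$-group (the group algebra of a cyclic group of order $n$ over the field of $p$ elements is a cyclic module of rank $n$).

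The paper avoids both difficulties by reducing to the order case instead of imitating its proof. For any normal $A$-invariant section $M$ of exponent $p$ (or $4$), the subgroups $[C_M(a),C_H(a)]$ have rank at most $r$ \emph{and} bounded exponent, hence $(r)$-bounded order by Lemma~\ref{expp}, so Lemma~\ref{order} gives $|[M,H]|\leq p^t$ with $t$ an $(r,q)$-bounded number. Applied to $P/\Phi(P)$ this shows $P$ is $t$-generated; applied to $N=\gamma_{2t+1}(P)$ modulo $N^{\bf p}$ (respectively modulo $\Phi(N)$) together with Lemma~\ref{subN} it shows that $N\leq Z_{2t+1}(P)$ there, whence $N$ is powerful and generated by an $(r,q)$-bounded number of elements; since rank equals the minimal number of generators for powerful $p$-groups, ${\bf r}(N)$ and then ${\bf r}(P)\leq{\bf r}(P/N)+{\bf r}(N)$ are bounded. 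You would need to adopt some such reduction to orders; pushing rank directly through the proof of Lemma~\ref{order} cannot work as written.
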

\begin{proof} Without loss of generality we assume that $P=[P,H]$. Let $M$ be any normal $A$-invariant subgroup of exponent $p$ (or exponent 4 if $p=2$) in $P$. By Lemma \ref{expp}, the order of $[C_M(a),C_H(a)]$ is $r$-bounded for any $a\in A$. Hence, applying Lemma \ref{order} we deduce that $[M,H]$ is of $(r,q)$-bounded order. In other words, there exists an $(r,q)$-bounded number $t$ such that $[M,H]\leq p^t$. Applying this argument to $P/\Phi(P)$ we conclude that $P$ can be generated with at most $t$ elements since the minimal number of generators of $P$ is equal to the rank of $P/\Phi(P)$ by the Burnside Basis Theorem.

Let the symbol ${\bf p}$ denote $p$ if $p$ is odd and 4 if $p=2$. Let $N=\gamma_{2t+1}(P)$ where $t$ is as above. We will show that $N$ is powerful, that is $N'\leq N^{\bf p}$. Pass to the quotient $G/N^{\bf p}$  and assume that $N$ has exponent ${\bf p}$. By the first paragraph  $|[N,H]|\leq p^t$ and so $N\leq Z_{2t+1}(P)$ by Lemma \ref{subN}. Note that $[\gamma_i(P),Z_i(P)]$=1 for any positive integer $i$. Therefore, $N$ is abelian modulo $N^{\bf p}$. This means that $N$ is powerful.

We now wish to show that $N$ can be generated with bounded number of elements. We can pass to the quotient $P/\Phi(N)$ and assume that $N$ is elementary abelian. Let $d$ be the minimal number of generators of $P$. For each $n$ the section $\gamma_n(P)/\gamma_{n+1}(P)$ is generated by $d^n$ elements (see for example \cite[Corollary 2.5.6]{khukhro}). Hence, it suffices to bound the nilpotency class of $P$. Recall that we have already proved that under our assumptions $N\leq Z_{2t+1}(P)$. Therefore $P$ is nilpotent of class at most $4t+2$ and so the minimal number of generators of $N$ is $(r,q)$-bounded. 

Since $N$ is powerful we obtain that ${\bf r}(N)$ is $(r,q)$-bounded. The lemma follows since obviously ${\bf r}(P)\leq {\bf r}(P/N)+{\bf r}(N)$.
\end{proof}

We finish this section with a useful result on coprime action. In the next lemma we will use the fact that if $D$ is any coprime group of automorphisms of a finite simple group, then $D$ is cyclic (see for example \cite{GS}). 

\begin{lemma}\label{prodS}
Let $D$ be a non-cyclic $q$-group of order $q^{2}$ acting on a finite $q'$-group $N=S_1\times\dots\times S_t$ which is a direct product of $t$ nonabelian simple groups. Suppose that ${\bf r}(\gamma_{\infty}(C_N(d)))\leq r$ for any $d\in D^{\#}$. Then $t$ is an $(r,q)$-bounded number and each direct factor $S_i$ has rank at most $r$.

\end{lemma}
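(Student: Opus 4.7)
The plan is to analyze the permutation action of $D$ on the set $\{S_1,\dots,S_t\}$ of simple factors and combine coprime-action facts with the rank hypothesis on each $\gamma_{\infty}(C_N(d))$.

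First, for each $i$ let $T_i$ denote the stabilizer of $S_i$ in $D$ and $K_i\leq T_i$ the kernel of the action of $T_i$ on $S_i$. Since $T_i/K_i$ is a coprime group of automorphisms of the simple group $S_i$, the fact quoted just before the lemma gives that $T_i/K_i$ is cyclic. Because $D\cong C_q\times C_q$ is elementary abelian, every cyclic section has order $1$ or $q$, so $|K_i|\geq q$ and $[D:T_i]\leq q$. Thus every $D$-orbit on $\{S_1,\dots,S_t\}$ has size $1$ or $q$; orbits of size $q^2$ are ruled out.

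Next, for each $i$ I would produce a $d\in D^{\#}$ such that $\gamma_{\infty}(C_N(d))$ contains a perfect direct factor isomorphic to $S_i$, which immediately forces $\mathbf{r}(S_i)\leq r$. If the orbit of $S_i$ has size $1$, any $d\in K_i^{\#}$ centralizes $S_i$, so $S_i$ is a direct factor of $C_N(d)$. If the orbit $\mathcal{O}$ of $S_i$ has size $q$, choose $d\in D\setminus T_i$; then $\langle d\rangle$ acts regularly on $\mathcal{O}$, and the subgroup of $\prod_{S_j\in\mathcal{O}}S_j$ fixed by $d$ is a diagonal copy of $S_i$, again a direct factor of $C_N(d)$. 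Since $S_i$ is perfect, it lies entirely inside $\gamma_{\infty}(C_N(d))$.

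Finally, to bound $t$, for each $d\in D^{\#}$ let $F(d)$ be the total number of direct factors of $C_N(d)$ constructed above (one per $D$-orbit of size $1$ on which $d$ acts trivially, plus one per $D$-orbit of size $q$ whose stabilizer does not contain $d$). These factors are nonabelian simple and sit in disjoint coordinate blocks of $N$, so their product inside $\gamma_{\infty}(C_N(d))$ is a direct product of $F(d)$ nonabelian simple groups. Because each such group has even order by Feit--Thompson, the product contains $C_2^{F(d)}$, which forces $F(d)\leq r$. Summing $F(d)$ over $d\in D^{\#}$ by switching the order of summation, and using that an orbit of size $1$ with kernel $K$ is counted $|K|-1$ times while an orbit of size $q$ with stabilizer of order $q$ is counted $q^2-q$ times, produces an inequality of the form $(q-1)(qa+t)\leq(q^2-1)r$, where $a$ denotes the number of simple factors centralized by all of $D$. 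This yields $t\leq(q+1)r$, which is $(r,q)$-bounded.

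The main obstacle is the bookkeeping in the final step: orbits of size $1$ must be separated according to whether their kernel $K_i$ equals $D$ or only a subgroup of order $q$, and after switching the order of summation one must verify that the resulting left-hand side is a positive multiple of $t$ with a computable coefficient.
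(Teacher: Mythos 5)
Your overall strategy (diagonal fixed-point subgroups for moved factors, an elementary abelian $2$-subgroup of rank $F(d)$ inside $\gamma_{\infty}(C_N(d))$ to force $F(d)\le r$, then a double count over $d\in D^{\#}$) is the same circle of ideas as the paper's proof, but there is a genuine error at the start that propagates into the final count. From the cyclicity of $T_i/K_i$ you conclude both $|K_i|\ge q$ and $[D:T_i]\le q$, and hence that orbits of size $q^2$ cannot occur. Neither conclusion follows when $T_i=1$: the trivial group is cyclic, so the quoted fact about coprime automorphism groups of simple groups puts no restriction on factors with trivial stabilizer. Moreover, orbits of size $q^2$ genuinely occur under the hypotheses: let $D=C_q\times C_q$ permute the $q^2$ coordinates of $N=S^{q^2}$ regularly (say $S=A_5$); then $C_N(d)\cong S^{q}$ for every $d\in D^{\#}$, which is perfect of rank roughly $2q$, so the hypothesis holds with $r$ of order $q$. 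In that example every $F(d)$ as you define it equals $0$ while $t=q^2$, so your identity $\sum_{d}F(d)=(q-1)(qa+t)$ and the resulting bound on $t$ fail. The rank bound on the individual $S_i$ has the same blind spot (your two cases do not cover orbits of size $q^2$), although there the fix is immediate, since for any $d\in D^{\#}$ the $\langle d\rangle$-orbit of $S_i$ still has size $q$ and still yields a diagonal copy of $S_i$ inside $C_N(d)$.

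The gap is repairable: a size-$q^2$ orbit contributes $q$ pairwise disjoint diagonal simple factors to $C_N(d)$ for every $d\in D^{\#}$, and inserting this into your double count recovers a bound of the form $t\le (q+1)r$. The paper avoids this orbit bookkeeping altogether: it decomposes $N$ into minimal normal $D$-invariant subgroups $K_1\times\dots\times K_s$, notes that each $K_j$ is a product of at most $|D|=q^2$ simple factors (so size-$q^2$ orbits are absorbed for free), and bounds $s$ by the same even-order/rank argument, settling for the weaker but correct bound $t\le q^2(q^2-1)r$.
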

\begin{proof}
First, we prove that each direct factor $S_i$ has rank at most $r$. Indeed, if $S_i$ is $D$-invariant, then $S_i$ is contained in $C_G(d)$ for some $d\in D^\#$ and so ${\bf r}(S_i)\leq r$ by the hypotheses. Suppose that $S_i$ is not $D$-invariant. Choose $d\in D^\#$ such that $S_i^d\neq S_i$. Write $S=S_i\times S_i^d\dots\times S_i^{d^{q-1}}$. We see that $C_S(d)$ is exactly the diagonal subgroup of $S$ and so $C_S(d)$ is isomorphic to $S_i$. Thus, we conclude that ${\bf r}(S_i)\leq r$.

Now we prove that $t$ is $(r,q)$-bounded. Write $G=K_1\times\dots\times K_s$ where each $K_i$ is a minimal normal $D$-invariant subgroup. Then each $K_i$ is a product of at most $|D|$ simple factors and so $t\leq|D|s$. Therefore it is sufficient to bound $s$.

Let $S_j$ be a direct factor of $K_i$. If $S_j$ is $D$-invariant, then $S_j$ is contained in $C_G(d)$ for some $d\in D^\#$. If $S_j$ is not $D$-invariant, then we can choose $d\in D^\#$ such that $S_i^d\neq S_i$. Now, it is clear that $C_S(d)$ is exactly the diagonal subgroup of $S_i\times S_i^d\dots\times S_i^{d^{q-1}}$ and so $C_S(d)$ is isomorphic to $S_j$. In other words, for every $i$ there exists $d\in D^\#$ such that $C_{K_i}(d)$ contains a subgroup isomorphic to some $S_j$. Therefore $\gamma_\infty(C_{K_i}(d))$ has even order. Since ${\bf r}(\gamma_\infty(C_{G}(d)))\leq r$, it follows that $\gamma_\infty(C_{K_i}(d))$ can have even order for at most $r$ indexes $i$.
Taking into account that there are only $|D|-1$ nontrivial elements in $D$, we deduce that $s\leq(|D|-1)r$.
\end{proof}

\section{Main results}

We will give a detailed proof only for Theorem \ref{main2}. The proof of Theorem \ref{main1} is easier and can be obtained by just obvious modifications of the proof of Theorem \ref{main2}. The following elementary lemma will be useful (for the proof see for example \cite[Lemma 2.4]{AST}).

\begin{lemma}\label{I3} Let $G$ be a finite group such that $\gamma_{\infty}(G)\leq F(G)$. Let $P$ be a Sylow $p$-subgroup of $\gamma_{\infty} (G)$ and $H$ be a Hall $p'$-subgroup of G. Then $P=[P,H]$.
\end{lemma}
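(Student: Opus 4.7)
The plan is to reduce to the case where $N := \gamma_\infty(G)$ is a $p$-group (so $N = P$), then show that the normal subgroup $L := [P, HP]$ equals $P$ by proving $G/L$ is nilpotent, and finally to deduce $[P, H] = P$ via Burnside's basis theorem.

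For the reduction I would write $N = P \times N_{p'}$, the Sylow decomposition of the nilpotent group $N$; both factors are characteristic in $N$, hence normal in $G$. Since $[P, N_{p'}] = 1$ (Sylow subgroups of a nilpotent group commute) and $P \cap N_{p'} = 1$, both the hypothesis and the desired conclusion descend to $G/N_{p'}$, so I may assume $N_{p'} = 1$. Then $N = P$, $G/P$ is nilpotent, and $H \cong HP/P$ is a nilpotent subgroup of $G/P$.

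In this reduced setting, the nilpotency of $G/P$ forces both its Sylow $p$-subgroup $Q/P$ and its Hall $p'$-subgroup $HP/P$ to be normal, so $Q$ and $HP$ are normal in $G$. Set $L := [P, HP]$; this is normal in $G$ and contained in $P$, and by Lemma \ref{C1} equals $[P, H] \cdot [P, P] = [P, H] P'$. Passing to $\bar G = G/L$, the image $\bar H$ centralizes $\bar P$, so $\bar P \bar H = \bar P \times \bar H$, which is the image of the normal subgroup $HP$, hence normal in $\bar G$. Then $\bar H$ is characteristic in $\bar P \bar H$ as its Hall $p'$-part, hence normal in $\bar G$. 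Combined with the normal Sylow $p$-subgroup $\bar Q$, this yields $\bar G = \bar Q \times \bar H$, which is nilpotent since $\bar Q$ is a $p$-group and $\bar H \cong H$ is nilpotent. Therefore $\gamma_\infty(G) \leq L$, and so $P = L = [P, H] P'$.

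Finally, using $P' \leq \Phi(P)$, the equality $[P, H] P' = P$ yields $[P, H] \Phi(P) = P$, and Burnside's basis theorem applied to the $p$-group $P$ then forces $[P, H] = P$. The main technical step to verify carefully is the nilpotency of $\bar G$, which ultimately rests on $\bar H$ being characteristic in the direct product $\bar P \bar H$; the rest is a structural consequence of the metanilpotent hypothesis together with the Frattini subgroup trick.
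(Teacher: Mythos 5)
Your proof is correct. Note that the paper does not actually prove this lemma: it is quoted from \cite[Lemma 2.4]{AST}, so there is no in-paper argument to compare against. Your reduction to $\gamma_\infty(G)=P$, the passage to the normal subgroup $L=[P,HP]=[P,H]P'$ (normality being the key point, secured by the normality of $HP$ in $G$, which in turn comes from the nilpotency of $G/P$), the verification that $G/L$ is nilpotent, and the final Frattini-argument step $P=[P,H]\Phi(P)\Rightarrow P=[P,H]$ are all sound; this is essentially the standard argument for this fact. Two small remarks. First, in the reduction you should note explicitly that $N_{p'}\leq H$ (any $p'$-subgroup containing a Hall $p'$-subgroup equals it), so that the image of $H$ really is a Hall $p'$-subgroup of $G/N_{p'}$ and the conclusion pulls back via Dedekind's law; this is routine but worth a line. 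Second, the last step of your "main technical point" can be shortened: once $\bar G=\bar Q\times\bar H$ with $\bar Q$ a $p$-group, you have $\gamma_\infty(\bar G)=\gamma_\infty(\bar H)$, a $p'$-group, while $\gamma_\infty(\bar G)$ is the image of $P$, a $p$-group; hence $\bar P=1$ and $P=L$ without invoking the nilpotency of $H$ at all (though that nilpotency is indeed automatic from $H\cong HP/P\leq G/P$).
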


Let $F(G)$ denote the Fitting subgroup of a group $G$. Write $F_{0}(G)=1$ and let $F_{i+1}(G)$ be the inverse image of $F(G/F_{i}(G))$. If $G$ is soluble, the least number $h$ such that $F_{h}(G)=G$ is called the Fitting height $h(G)$ of $G$. Let $B$ be a coprime group of automorphisms of a finite soluble group $G$. It was proved in \cite{Tur} that the Fitting height of $G$ is bounded in terms of $h(C_G(B))$ and the number of prime factors of $|B|$ counting multiplicities.  The nonsoluble length $\lambda(G)$ of a finite group $G$ is defined as the minimum number of nonsoluble factors in a normal series of $G$ all of whose factors are either soluble or (non-empty) direct products of nonabelian simple groups. It was proved in \cite{KS3} (see Corollary 1.2) that the nonsoluble length $\lambda(G)$ of a finite group $G$ does not exceed the maximum Fitting height of soluble subgroups of $G$. If $B$ is a coprime group of automorphisms of a finite group $G$, then the nonsoluble length $\lambda(G)$ of $G$ is bounded in terms of $\lambda (C_G(B))$ and the number of prime factors of $|B|$ counting multiplicities \cite{KS1}. 

Let us now assume the hypothesis of Theorem \ref{main2}. Thus, $A$ is a finite group of prime exponent $q$ and order at least $q^3$ acting on a finite $q'$-group $G$ in such a manner that ${\bf r}(\gamma_{\infty} (C_{G}(a))) \leq r$ for any $a\in A^{\#}$. We wish to show that ${\bf r}(\gamma_{\infty} (G))$ is $(r,q)$-bounded. It is clear that $A$ contains a subgroup of order $q^3$. Thus, replacing if necessary $A$ by such a subgroup we may assume that $A$ has order $q^3$. 

Suppose that $G$ is soluble. In that case $C_G(a)$ has $r$-bounded Fitting height for any $a\in A^{\#}$ (see for example Lemma 1.4 of \cite{KS2}). Hence, $G$ has $(r,q)$-bounded Fitting height and we can use induction on $h(G)$. In the case where $h(G)=2$ the proof is immediate from Lemma \ref{rank}. Indeed, let $P$ be a Sylow $p$-subgroup of $\gamma_{\infty}(G)$ and $H$ a Hall $A$-invariant $p'$-subgroup of $G$. Then by Lemmas \ref{rank} and \ref{I3} the rank of $P=[P,H]$ is $(r,q)$-bounded. Therefore the rank of $\gamma_{\infty} (G)$ is $(r,q)$-bounded. Suppose that the Fitting height of $G$ is $h>2$ and let $N=F_2(G)$ be the second term of the Fitting series of $G$. It is clear that the Fitting height of $G/\gamma_{\infty} (N)$ is $h-1$ and $\gamma_{\infty} (N)\leq \gamma_{\infty}(G)$. Hence, by induction we have that $\gamma_{\infty}(G)/\gamma_{\infty} (N)$ has $(r,q)$-bounded rank. Now, the result follows since ${\bf r}(\gamma_{\infty}(G))\leq {\bf r}( \gamma_{\infty}(G)/\gamma_{\infty} (N))+{\bf r}(\gamma_{\infty}(N))$.

We now drop the assumption that $G$ is soluble. Remark that $\lambda(C_G(a))$ is $r$-bounded for any $a\in A^{\#}$ by \cite{KS3} since its soluble subgroups have $r$-bounded Fitting height. Hence, $\lambda (G)$ is $(r,q)$-bounded and we can use induction on $\lambda(G)$.

First, assume that $G=G'$ and $\lambda(G)=1$. Since $G=G'$, it follows that $G/R(G)$ is a product of nonabelian simple groups where $R(G)$ is the soluble radical of $G$. By the above $\gamma_{\infty }(R(G))$ has $(r,q)$-bounded rank. We can factor out $\gamma_{\infty }(R(G))$ and assume $R(G)$ is nilpotent, that is $R(G)=F(G)$.

We now wish to show that the rank of $[F(G),G]$ is $(r,q)$-bounded. Clearly, it  is sufficient to consider the case when $F(G)=P$ where $P$ is a Sylow $p$-subgroup of $F(G)$. Note that if $s$ is a prime different from $p$ and $H$ is an $A$-invariant Sylow $s$-subgroup of $G$, then ${\bf r}(\gamma_{\infty}(PH))$ is $(r,q)$-bounded because $PH$ is soluble. We will require the following observation about finite simple groups.

\begin{lemma}\label{uu}
Let $K$ be a nonabelian finite simple group and $p$ a prime. There exists a prime $s$ different from $p$ such that $K$ is generated by two Sylow $s$-subgroup.
\end{lemma}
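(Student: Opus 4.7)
The plan is to invoke the classification of finite simple groups (CFSG) together with well-known results on generation of simple groups by pairs of conjugate Sylow subgroups. The first key observation is that by Burnside's $p^aq^b$ theorem, the order of any nonabelian finite simple group is divisible by at least three distinct primes, so at least two prime divisors of $|K|$ are different from the given prime $p$. It therefore suffices to exhibit, among these, one prime $s$ for which $K$ is generated by two Sylow $s$-subgroups.

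The main tool is the fact that for every nonabelian finite simple group $K$ and every prime divisor $s$ of $|K|$, with at most finitely many exceptions depending on $s$ and $K$, the group $K$ is generated by two Sylow $s$-subgroups. A uniform template for verifying this is to identify a maximal subgroup $M$ of $K$ containing a Sylow $s$-subgroup $S$, to check that $M$ is (up to conjugation) the unique maximal overgroup of $S$, and to conclude that any two Sylow $s$-subgroups whose normalizers lie in distinct conjugates of $M$ must generate $K$. Concretely: for groups of Lie type one takes $s$ to be a Zsigmondy prime divisor of the relevant field parameter, so that the Sylow $s$-subgroup is cyclic and self-normalizing inside a uniquely determined maximal torus normalizer; for alternating groups $A_n$ with $n\ge 5$ one picks a prime $s$ with $n/2 < s\le n$, whose Sylow subgroup is generated by an $s$-cycle and lies only in an intransitive or imprimitive maximal subgroup that cannot contain a generic second Sylow $s$-subgroup; for the sporadic groups one verifies the statement directly from the known maximal-subgroup structure in the ATLAS. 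In each family the set of good primes $s$ is large enough that a choice of $s\neq p$ is always available.

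The main obstacle is the finite list of small-parameter exceptions, chiefly $A_5$, $A_6$, $\mathrm{PSL}_2(q)$ for small $q$, the Suzuki groups $Sz(q)$ with their exceptional $2$-local structure, and the smallest sporadic groups, where the uniform template does not apply verbatim. In these cases the redundancy provided by Burnside's theorem is essential: with at least two candidate primes $s\neq p$ at hand, an individual check (using explicit knowledge of maximal subgroups) always produces at least one $s$ for which two Sylow $s$-subgroups generate $K$.
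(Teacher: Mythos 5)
There is a genuine gap: your argument is a proof \emph{template} rather than a proof, and every substantive step is deferred. The ``main tool'' you invoke --- that for every prime divisor $s$ of $|K|$, with finitely many exceptions, $K$ is generated by two Sylow $s$-subgroups --- is not a quotable theorem in that form; it is essentially the statement to be proved, and you do not establish it. The verification scheme you propose also contains claims that are false or unestablished as stated: for a group of Lie type, a Zsigmondy prime for the ``relevant field parameter'' need not exist (Zsigmondy's theorem has exceptions), and even when it does, the maximal overgroups of the corresponding torus are \emph{not} unique up to conjugacy in general (subfield subgroups and various classical subgroups can contain the same maximal torus; classifying these overgroups is the content of a substantial body of work and is not a one-line check). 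For alternating groups, a Sylow $s$-subgroup with $n/2<s\le n$ can lie in primitive maximal subgroups when $s$ is close to $n$, so your description of its overgroups is not correct as stated. Finally, the assertion that ``the set of good primes $s$ is large enough that a choice of $s\neq p$ is always available,'' together with the catch-all ``an individual check always produces at least one $s$'' for the exceptional cases, is exactly the crux of the lemma and is asserted, not proved. The observation from Burnside's $p^aq^b$ theorem that at least two primes other than $p$ divide $|K|$ is correct but does not by itself produce a good prime.

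For comparison, the paper's proof avoids any case analysis over the classification by reducing to two precise citations. If $p\neq 2$, Guralnick's theorem that every nonabelian finite simple group is generated by an involution and a Sylow $2$-subgroup gives $s=2$ immediately. If $p=2$, King's theorem provides $K=\langle i,a\rangle$ with $|i|=2$ and $|a|=s$ an odd prime; then $\langle a,a^i\rangle$ is normalized by both $a$ and $i$, hence normal, hence equal to $K$, so $K$ is generated by two conjugates of $\langle a\rangle$ and therefore by two Sylow $s$-subgroups. If you want to salvage your approach, you would need either to locate and cite a theorem that directly gives generation by two Sylow $s$-subgroups for a controllable set of primes $s$, or to actually carry out the family-by-family analysis you sketch --- which is a much larger undertaking than the two-citation argument.
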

\begin{proof}
If $p\neq2$, then we can use Guralnick's result \cite[Theorem A]{G} that $K$ is generated by an involution and a Sylow 2-subgroup. We therefore can take $s=2$. If $p=2$, we can use King's results \cite{Ki} that $K=\langle i, a \rangle$, where $|i|=2$ and $|a|$ is an odd prime. We have $K=\langle a, a^i \rangle$ since this is an $a$-invariant and $i$-invariant subgroup, which is therefore normal. Hence, $K$ is generated by two elements of odd prime order and the lemma follows.
\end{proof}

By Lemma \ref{prodS} the quotient $G/F(G)$ is a product of a $(r,q)$-bounded number of normal $A$-invariant subgroups $K_1\times \cdots \times K_s$ where $K_i$ is a product of at most $|A|$ nonabelian simple groups. Hence, without loss of generality we can assume that  $G/F(G)$ is a product of isomorphic nonabelian simple groups. In view of Lemma \ref{uu} we deduce that $G/P$ is generated by the image of two Sylow $s$-subgroup $H_1$ and $H_2$ where $s$ is a prime different from $p$. On the other hand, $H_1$ and $H_2$ are conjugate of an $A$-invariant Sylow $s$-subgroup of $G$. It follows that both $[P,H_1]$ and $[P,H_2]$ have $(r,q)$-bounded rank.

Let $H=\langle H_1,H_2 \rangle$. Thus $G=PH$. Since $G=G'$, it is clear that $G=[P,H]H$ and $[P,G]=[P,H]$. By Lemma \ref{C1} we have $[P,H]=[P,H_1][P,H_2]$ and therefore the rank of $[P,H]$ is $(r,q)$-bounded. Passing to the quotient $G/[P,G]$ we can assume that $P=Z(G)$. So we are in the situation where $G/Z(G)$ has $(r,q)$-bounded rank. By a theorem of Lubotzky and Mann \cite{LM} (see also \cite{KuS}) the rank of $G'$ is $(r,q)$-bounded as well. Taking into account that $G=G'$ we conclude that the rank of $G$ is $(r,q)$-bounded.

Let us now deal with the case where $G\neq G'$. Let $G^{(l)}$ be the last term of the derived series of $G$. The argument in the previous paragraph shows that ${\bf r}(G^{(l)})$ is $(r,q)$-bounded. Consequently, ${\bf r}(\gamma_{\infty}(G))$ is $(r,q)$-bounded since $G/G^{(l)}$ is soluble and $\gamma_{\infty}(G^{(l)})\leq\gamma_{\infty}(G)$. This proves the theorem in the particular case where $\lambda(G)\leq1$.

Assume that $\lambda (G)\geq 2$. Let $T$ be a characteristic subgroup of $G$ such that $\lambda(T)=\lambda(G)-1$ and $\lambda(G/T)=1$. By induction, the rank of $\gamma_\infty(T)$ is $(r,q)$-bounded. It is clear that $\lambda(G/\gamma_\infty(T))=1$.  Therefore, the result follows since ${\bf r} (\gamma_{\infty}(G)) \leq {\bf r}(G/\gamma_\infty(T))+{\bf r}(\gamma_\infty(T))$.

\baselineskip 11 pt

\end{document}